\title{Gorenstein  dimensions in trivial ring extensions}
\date{}
\newtheorem{thm}{\bf Theorem}[section]
\newtheorem{cor}[thm]{\bf Corollary}
\newtheorem{lem}[thm]{\bf Lemma}
\newtheorem{prop}[thm]{\bf Proposition}
\newtheorem{rem}[thm]{\bf Remark}
\newtheorem{exmp}[thm]{\bf Example}
\newcommand{\field}[1]{\mathbb{#1}}
\newcommand{\Q }{\field{Q}}
\newcommand{\Z }{\field{Z}}
\def\gldim{{\rm gldim}}
\def\Gwdim{{\rm G\!-\!wdim}}
\def\Ggldim{{\rm G\!-\!gldim}}
\def\gldim{{\rm \!gldim}}
\def\wgldim{{\rm \!w.gl.dim}}
\def\pd{{\rm pd}}
\def\fd{{\rm fd}}
\def\id{{\rm id}}
\def\Gpd{{\rm Gpd}}
\def\Gfd{{\rm Gfd}}
\def\Gid{{\rm Gid}}
\def\Ext{{\rm Ext}}
\def\Tor{{\rm Tor}}
\def\Hom{{\rm Hom}}
\def\sup{{\rm sup}}
\def\qf{{\rm qf}}
\def\1{{\noindent\rm (1)}}
\def\2{{\noindent\rm (2)}}
\def\3{{\noindent\rm (3)}}
\def\4{{\noindent\rm (4)}}
\def\5{{\noindent\rm (5)}}
\begin{document}
\thispagestyle{empty}

\maketitle \vspace*{-1.5cm}
\begin{center}{\large\bf  Najib Mahdou and Khalid Ouarghi}

\bigskip

 \small{Department of Mathematics, Faculty of Science and Technology of Fez,\\ Box 2202, University S. M.
Ben Abdellah Fez, Morocco\\[0.12cm]
mahdou@hotmail.com\\
ouarghi.khalid@hotmail.fr }
\end{center}

\bigskip\bigskip

\noindent{\large\bf Abstract.}In this paper,   we show that the
Gorenstein global dimension of  trivial ring extensions is often
infinite. Also we study the transfer of Gorenstein properties
between a ring and its trivial ring extensions.
  We conclude with an example showing that, in
general, the transfer of the notion of Gorenstein projective
module does not carry up to pullback constructions.\bigskip

\small{\noindent{\bf Key Words.} }(Gorenstein) projective
dimension, (Gorenstein) injective dimension, (Gorenstein) flat
dimension, trivial ring extension, global dimension, weak global
dimension, quasi-Frobenius ring, perfect ring
\bigskip\bigskip


\begin{section}{Introduction}

  Throughout this work, all rings are  commutative  with identity
                element and  all modules are unital. Let $R$ be a ring and $M$  an $R$-module. We use
                $\pd_R(M)$, $\id_R(M)$  and $\fd_R(M)$ to denote the usual projective, injective and flat
                dimensions of M, respectively. It is convenient to use ``local'' to
                refer to (not necessarily Noetherian) rings with a unique maximal
                ideal.

                In 1967-69, Auslander and Bridger \cite{A1,A2} introduced the concept of
                G-dimension for finitely generated modules over Noetherian rings.
                Several decades later, Enochs,
                Jenda and Torrecillas \cite{EJ1,EJ2,EJT} extended this notion by introducing three homological dimensions called
                Gorenstein projective, injective, and flat dimensions, which have
                all been studied extensively by their founders and also by Avramov,
                Christensen, Foxby, Frankild, Holm, Martsinkovsky, and Xu among
                others \cite{Avr,LW,CFH,GdimCM,HH,Xu}. For a ring $R$, the Gorenstein projective, injective and flat
                dimension of an $R$-module $M$ denoted $\Gpd_R(M)$, $\Gid_R(M)$ and $\Gfd_R(M)$, respectively, is defined
                in terms of resolutions of
                Gorenstein projective, injective and flat modules, respectively (see \cite{HH}). The Gorenstein projective dimension is a
                refinement of projective dimension to the effect that $\Gpd_R(M) \leq \pd_R(M)$ and equality holds when $\pd_R(M)$ is
                finite.\\
                           Recently, in \cite{BM}, the authors
                           introduced three classes of modules
                           called strongly Gorenstein projective,
                           injective and flat modules. These modules allowed for nice characterizations
            of Gorenstein projective and injective modules \cite[Theorem 2.7]{BM},
            similar to the characterization of projective modules
            via the free modules. In  \cite{BM2}, the authors  started the study of
                      Gorenstein homological dimensions
                     of a ring $R$; namely,
                     the Gorenstein global
                     dimension of $R$, denoted $\Ggldim(R)$, and the Gorenstein weak (global)
                     dimension of $R$, denoted $\Gwdim(R)$, and defined as follows:
$\Ggldim(R)=\sup\{\Gpd_R(M)\,|\,M\;R\!-\!module\}
=\sup\{\Gid_R(M)\,|\,M\;R\!-\!module\}$ \cite[Theorem 3.2]{BM2}
and $\Gwdim(R) =\sup\{\Gfd_R(M) \mid M\; R\!-\!module\}$. They
proved that, for any ring R, $\Gwdim(R) \leq \Ggldim(R)$
                        \cite[Theorems 4.2]{BM2} and that
                          the Gorenstein
                        weak and global dimensions are refinements
                        of the classical ones, i.e.,
                        $\Ggldim(R) \leq \gldim(R)$ and $\Gwdim(R)\leq \wgldim(R)$ with quality holding
                        if the weak global dimension of $R$ is finite \cite[Propositions 3.11 and
                        4.5]{BM2}.\\
                        This paper studies the Gorenstein dimensions in trivial ring extensions. Let $A$ be a ring and $E$ an $A$-module. The trivial ring extension of
                       $A$ by $E$ is the ring $R := A \ltimes E$ whose underlying group is
                       $A \times E$ with multiplication given by $(a,e)(a',e') = (aa',ae'
                       +a'e)$ \cite{Huckaba,KabMah}. Specifically, we investigate the possible transfer of Gorenstein
                       properties between a ring $A$ and its trivial ring extensions. Section 2 deals with the
                       descent and ascent of the (strongly) Gorenstein
                       properties between   $A$-modules and $R$-modules, where $R$ is a trivial ring
                       extension of $A$ (Theorem~\ref{thmtransSG-proj}, Corollary~\ref{cortransGP} and Proposition~\ref{lem1-2}). The last part of this section
                       is dedicated to  the Gorenstein global
                       dimension (Theorem~\ref{thm1-1}). In Section 3, we compute $\Ggldim(A\ltimes E)$ when
                       $(A,m)$ is a local ring with $mE=0$ (Theorem~\ref{thm1}) as well as  $\Ggldim(D\ltimes E)$ when $D$
                       is an integral domain and $E$ is an
                       $\qf(D)$-vector space (Theorem~\ref{thm2}).
                       The last theorem gives rise to an example
                       showing that, in general,
                       the notion of Gorenstein projective
                       module does not carry up to pullback
                       constructions (Example~\ref{exppulb}).

\end{section}

\begin{section}{Transfer of Gorenstein properties to trivial ring extensions}\label{sec:2}

Throughout this section, we adopt the following notation:  $A$ is
a ring, $E$
  an $A$-module and
$R=A\ltimes E$, the trivial ring extension of $A$ by  $E$. We
study  the transfer of (strongly) Gorenstein projective and
injective notions  between  $A$ and  $R$. We start this section
with the following theorem which handles the transfer of strongly
Gorenstein properties between  $A$-modules and $R$-modules.

\begin{thm}\label{thmtransSG-proj}
 Let $M$ be an $A$-module. Then:
 \begin{enumerate}
    \item
    \begin{description}
        \item[\rm (a)] Suppose that $\pd_A(E)< \infty$. If $M$ is a strongly
    Gorenstein projective $A$-module, then $M\otimes_A R$ is a
    strongly Gorenstein projective $R$-module.
        \item[\rm (b)] Conversely, suppose that $E$ is a flat $A$-module. If $M\otimes_A R$ is
    a strongly Gorenstein projective $R$-module, then $M$ is a strongly
    Gorenstein projective $A$-module.
    \end{description}
    \item Suppose that $\Ext_A^p(R,M)=0$ for all $p\geq 1$ and $\fd_A(R)<\infty$. If $M$ is a strongly
    Gorenstein injective $A$-module, then $\Hom_A(R,M)$ is a strongly
    Gorenstein injective $R$-module.
 \end{enumerate}
\end{thm}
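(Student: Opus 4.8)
The plan is to treat all three assertions by one change-of-rings mechanism: induction $-\otimes_A R$ for the projective statements and coinduction $\Hom_A(R,-)$ for the injective one. The two facts I will lean on throughout are the orthogonality properties of Gorenstein modules (recall that strongly Gorenstein projective/injective modules are in particular Gorenstein projective/injective by \cite[Theorem 2.7]{BM}): if $M$ is Gorenstein projective over $A$ and $\pd_A(N)<\infty$, then $\Ext_A^i(M,N)=0=\Tor^A_i(M,N)$ for all $i\geq 1$, and dually if $M$ is Gorenstein injective and $\id_A(N)<\infty$ then $\Ext_A^i(N,M)=0$ for all $i\geq 1$; each follows by dimension shifting along a complete resolution of $M$. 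For (1)(a) I begin with the periodic sequence $0\to M\to P\to M\to 0$ ($P$ projective) supplied by strong Gorenstein projectivity. Since $\pd_A(E)<\infty$ forces $\pd_A(R)=\pd_A(A\oplus E)<\infty$, orthogonality gives $\Tor^A_i(M,R)=0$ for $i\geq 1$; hence $-\otimes_A R$ preserves exactness and, as $P\otimes_A R$ is projective over $R$, produces the candidate sequence $0\to M\otimes_A R\to P\otimes_A R\to M\otimes_A R\to 0$. The same vanishing makes $R\otimes_A Q_\bullet$ a projective $R$-resolution of $M\otimes_A R$ for any projective $A$-resolution $Q_\bullet\to M$, so the restriction--induction adjunction upgrades to $\Ext_R^i(M\otimes_A R,L)\cong\Ext_A^i(M,L)$ for every $R$-module $L$. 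For $L$ projective over $R$ its restriction to $A$ is a summand of a free $R$-module and so has finite projective dimension over $A$; the right-hand side then vanishes for $i\geq 1$, which is exactly the missing orthogonality, and $M\otimes_A R$ is strongly Gorenstein projective.

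For the converse (1)(b), flatness of $E$ makes $R$ flat over $A$, giving both the flat adjunction and flat base change for $\Tor$. The orthogonality for $M$ drops out by noting that a projective $A$-module $F$ is an $A$-summand of $(F\otimes_A R)_A$ via the splitting $A\to R\to A$, so $\Ext_A^i(M,F)$ is a summand of $\Ext_R^i(M\otimes_A R,F\otimes_A R)$, which is $0$ for $i\geq 1$. The periodic sequence for $M$ I recover by descent: applying $-\otimes_R A$ to a defining sequence $0\to M\otimes_A R\to Q\to M\otimes_A R\to 0$ over $R$, flat base change yields $\Tor^R_1(M\otimes_A R,A)\cong\Tor^A_1(M,A)=0$, so the descended sequence $0\to M\to Q\otimes_R A\to M\to 0$ stays exact, with $Q\otimes_R A$ projective over $A$ because base change along $R\to A$ preserves projectivity. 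Together these make $M$ strongly Gorenstein projective.

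Assertion (2) is the exact dual of (1)(a) with $\Hom_A(R,-)$ in place of $-\otimes_A R$, and the hypothesis $\Ext_A^p(R,M)=0$ for $p\geq 1$ does double duty. Applied to $0\to M\to I\to M\to 0$ it keeps $0\to\Hom_A(R,M)\to\Hom_A(R,I)\to\Hom_A(R,M)\to 0$ exact, with $\Hom_A(R,I)$ injective over $R$; applied to an injective $A$-resolution $J^\bullet$ of $M$ it makes $\Hom_A(R,J^\bullet)$ an injective $R$-resolution of $\Hom_A(R,M)$, whence $\Ext_R^i(L,\Hom_A(R,M))\cong\Ext_A^i(L,M)$ for every $R$-module $L$. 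For $L$ injective over $R$ the composite-functor computation $\Ext_A^n(N,L)\cong\Hom_R(\Tor^A_n(R,N),L)$ combined with $\fd_A(R)<\infty$ gives $\id_A(L)\leq\fd_A(R)<\infty$, so Gorenstein injectivity of $M$ kills $\Ext_A^i(L,M)$ for $i\geq 1$, delivering the required orthogonality and the strong Gorenstein injectivity of $\Hom_A(R,M)$.

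The step I expect to fight hardest with is the genuine change-of-rings passage from the adjunction of $\Hom$-sets to the isomorphism of all $\Ext$-groups. For an arbitrary map $A\to R$ this needs $R$ flat (respectively projective) over $A$, which fails under the bare hypotheses of (1)(a) and (2); the point that rescues it is that the Gorenstein module in play is $\Tor$-orthogonal (respectively $\Ext$-orthogonal) to $R$, so that $R\otimes_A Q_\bullet$ (respectively $\Hom_A(R,J^\bullet)$) is acyclic and serves as a bona fide resolution. Pinning down this acyclicity, together with the two auxiliary dimension estimates---that restrictions of projective $R$-modules have finite projective dimension over $A$ and restrictions of injective $R$-modules have finite injective dimension over $A$---is where the real content sits; the remaining periodic-sequence bookkeeping is routine.
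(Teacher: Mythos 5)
Your proposal is correct and follows essentially the same route as the paper: the periodic sequence $0\to M\to P\to M\to 0$ is transported by $-\otimes_A R$ (resp.\ $\Hom_A(R,-)$) using the $\Tor$- (resp.\ $\Ext$-) vanishing against $R$ coming from periodicity and $\pd_A(R)<\infty$ (resp.\ the hypothesis $\Ext_A^p(R,M)=0$), and the remaining orthogonality is obtained from the Cartan--Eilenberg change-of-rings isomorphisms together with the observation that projective (resp.\ injective) $R$-modules have finite projective (resp.\ injective) dimension over $A$ because $\fd_A(R)<\infty$. The one place you genuinely deviate is in (1)(b): to recover the $A$-periodic sequence you apply $-\otimes_R A$ to the $R$-sequence and use flat base change to get $\Tor^R_1(M\otimes_A R,A)\cong\Tor^A_1(M,A)=0$, whereas the paper identifies the free module $F$ with $A^{(J)}\otimes_A R$ and invokes faithfully flat descent of exactness. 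Your variant is in fact slightly more careful, since it does not require the $R$-linear maps in the defining sequence to be extended from $A$-linear maps, which is the tacit assumption in the paper's descent step; both versions reach the same conclusion with the same final summand argument $\Ext_A^k(M,P\otimes_A R)\cong\Ext_A^k(M,P)\oplus\Ext_A^k(M,P\otimes_A E)$.
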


\begin{proof}(1) (a) Suppose that $M$ is a strongly Gorenstein projective
$A$-module. Then  there is an exact sequence of $A$-modules:
$$ 0\rightarrow M\rightarrow
P\rightarrow M\rightarrow 0  \qquad(\star)$$
 where $P$ is projective \cite[Proposition 2.9]{BM}. It is known that $R=A\oplus_{A} E$ and since
$\pd_A(E)< \infty$ we have $\pd_A(R)< \infty$ and from the exact
sequence $(\star)$, $\Tor_A^i(M,R)=0$, $\forall\, i\geq 1$. Then
the sequence $0\rightarrow M\otimes_A R\rightarrow P\otimes_A
R\rightarrow M\otimes_A R\rightarrow 0$ is exact. Note that
$P\otimes_A R$ is a projective $R$-module. On the other hand, for
any $R$-module projective $Q$, $\pd_A(Q)< \infty$ \cite[Exercise
5, page 360]{CE}. Then, since $M$ is strongly Gorenstein
projective, $\Ext_R(M\otimes_A R, Q)=\Ext_A(M, Q)=0$ \cite[page
118]{CE}. Therefore $ M\otimes_A R$ is a strongly Gorenstein
projective $R$-module \cite[Proposition 2.9]{BM}.

(b) If $E$ is a flat $A$-module, then $R=A\ltimes E$ is a
faithfully flat $A$-module. Suppose that $M\otimes_A R$ is
strongly Gorenstein projective;  combining \cite[ Remark 2.8]{BM}
and \cite[ Proposition 2.9]{BM}, there is an exact sequence of $
R$-modules:
 $$0\rightarrow M\otimes_A R\rightarrow F\rightarrow
M\otimes_A R\rightarrow 0\qquad(\star\star)$$
 where $F= R^{(J)}$
is a free $R$-module. Then the  sequence $(\star\star)$ is
equivalent to the exact sequence:
$$0\rightarrow M\otimes_A R\rightarrow A^{(J)}\otimes_A
R\rightarrow M\otimes_A R\rightarrow 0.$$
 Since $R$ is a
faithfully flat $A$-module, the sequence of $A$-module
$0\rightarrow M\rightarrow A^{(J)}\rightarrow M\rightarrow 0$ is
exact. On the other hand, let $P$ be a projective $A$-module. Then
$P\otimes_A R$ is a projective $R$-module and
$\Ext_A^k(M,P\otimes_A R)=\Ext_R^k(M\otimes_A R, P\otimes_A R)=0$,
since $\Tor_i^A(M,R)=0$ and by \cite[Proposition 4.1.3, page
118]{CE}. But $0=\Ext_A^k(M,P\otimes_A R)\cong
\Ext_A^k(M,P)\oplus_A \Ext_A^k(M,P\otimes_A E)$, then
$\Ext_A^k(M,P)=0$. Therefore $M$ is a strongly Gorenstein
projective $A$-module.

(2) If $M$ is a strongly   Gorenstein injective $A$-module, there
exists an exact sequence of $A$-modules:
$$0\rightarrow M\rightarrow
I\rightarrow M\rightarrow 0$$
 where $I$ is an injective $A$-module. Since $\Ext_A(R,M)=0$, the
sequence
$$0\rightarrow \Hom_A(R,M)\rightarrow
\Hom_A(R,I)\rightarrow \Hom_A(R,M)\rightarrow 0$$
 is exact.  Note
that $\Hom_A(R,I)$ is an injective $R$-module. On the other hand,
for any   injective $R$-module $J$, we have $\id_A(J)<\infty$
(since $\fd_A(R)<\infty$ and by \cite[Exercise 5, page 360]{CE})
and $\Ext_R^i(J,\Hom_A(R,M))\cong \Ext_A^i(J,M)=0$
\cite[Proposition 4.1.4, page 118]{CE}. Therefore $\Hom_A(R,M)$ is
a strongly Gorenstein injective $R$-module.
\end{proof}

\begin{rem}\label{rem1}
The statements (1)(a) and (b) in Theorem~\ref{thmtransSG-proj}
hold for any homomorphism from $A$ to $R$  of finite projective
dimension  in (a) and faithfully flat  in (b), respectively. But
here we restrain our study to trivial ring extensions.
\end{rem}

\begin{cor}\label{cortransGP}
 Let $M$ be an $A$-module. Then:

\begin{enumerate}
    \item
 Suppose that $\pd_A(E)< \infty$. If $M$ is
    a Gorenstein projective $A$-module, then $M\otimes_A R$ is a
     Gorenstein projective $R$-module.

    \item Suppose that $\Ext_A^p(R,M)=0$ for all $p\geq 1$ and $\fd_A(R)<\infty$. If $M$ is
    a Gorenstein injective $A$-module, then $\Hom_A(R,M)$ is a
    Gorenstein injective $R$-module.
\end{enumerate}

\end{cor}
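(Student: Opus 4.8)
The plan is to deduce both parts straight from Theorem~\ref{thmtransSG-proj} by means of the characterization \cite[Theorem 2.7]{BM}, which says that an $A$-module is Gorenstein projective (resp.\ Gorenstein injective) exactly when it is a direct summand of a strongly Gorenstein projective (resp.\ strongly Gorenstein injective) module. Thus I would first realize $M$ as a direct summand of a strongly Gorenstein module $S$, transfer $S$ to $R$ via the theorem, and then recover the conclusion by passing back to the summand, using that the relevant change-of-rings functor is additive.

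For (1), write $M\oplus N=S$ with $S$ a strongly Gorenstein projective $A$-module. The only hypothesis of Theorem~\ref{thmtransSG-proj}(1)(a), namely $\pd_A(E)<\infty$, concerns $E$ alone and not the transferred module, so it applies verbatim to $S$ and yields that $S\otimes_A R$ is strongly Gorenstein projective over $R$. Since $-\otimes_A R$ is additive, $(M\otimes_A R)\oplus(N\otimes_A R)\cong S\otimes_A R$, so $M\otimes_A R$ is a direct summand of a strongly Gorenstein projective $R$-module, hence Gorenstein projective by \cite[Theorem 2.7]{BM}. This part is straightforward precisely because no hypothesis depends on the module being transferred.

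For (2) the same scheme runs with $-\otimes_A R$ replaced by the additive functor $\Hom_A(R,-)$: writing $M\oplus N=S$ with $S$ strongly Gorenstein injective, one has $\Hom_A(R,S)\cong\Hom_A(R,M)\oplus\Hom_A(R,N)$, so it suffices to know that $\Hom_A(R,S)$ is strongly Gorenstein injective over $R$, which Theorem~\ref{thmtransSG-proj}(2) grants once $\fd_A(R)<\infty$ and $\Ext_A^p(R,S)=0$ for all $p\geq 1$. Here lies the essential difficulty: unlike in (1), the theorem carries a hypothesis on the transferred module itself, and a priori I only know $\Ext_A^p(R,M)=0$, not $\Ext_A^p(R,S)=0$. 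The main obstacle is therefore to propagate the vanishing of $\Ext_A^{\geq 1}(R,-)$ from $M$ to the whole enveloping module $S$.

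To address this I would take $S$ to be the product of the syzygy modules $C_k$ $(k\in\mathbb{Z})$ of a complete injective resolution $\cdots\to J_{-1}\to J_0\to J_1\to\cdots$ of $M$, with $C_0=M$ and short exact sequences $0\to C_k\to J_k\to C_{k+1}\to 0$; since $\Ext$ commutes with products in its second variable, it is enough to prove $\Ext_A^p(R,C_k)=0$ for all $k$ and all $p\geq 1$. As $\Ext_A^{\geq 1}(R,J_k)=0$, each such sequence gives $\Ext_A^p(R,C_{k+1})\cong\Ext_A^{p+1}(R,C_k)$ for $p\geq 1$, so starting from $\Ext_A^{\geq 1}(R,M)=0$ an upward induction settles the cosyzygies $C_k$ with $k\geq 0$ cleanly. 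The delicate direction is $k<0$, where the same shift gives $\Ext_A^1(R,C_k)\cong\Ext_A^{1+m}(R,C_{k-m})$ but leaves the low-degree terms undetermined by themselves; this is exactly where I expect $\fd_A(R)<\infty$ to be indispensable. The idea is that, all $C_{k-m}$ being Gorenstein injective, one pushes the degree up indefinitely and then forces $\Ext_A^{1+m}(R,C_{k-m})=0$ for $m$ large from the finiteness of the flat dimension of $R$, via a finite flat/projective resolution of $R$ together with the change-of-rings estimates already invoked in the proof of Theorem~\ref{thmtransSG-proj} (in the spirit of \cite[Exercise 5, page 360]{CE}). Granting this vanishing, $\Ext_A^{\geq 1}(R,S)=0$, Theorem~\ref{thmtransSG-proj}(2) applies to $S$, and $\Hom_A(R,M)$ is a direct summand of the strongly Gorenstein injective $R$-module $\Hom_A(R,S)$, hence Gorenstein injective by \cite[Theorem 2.7]{BM}.
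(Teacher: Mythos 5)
Your reduction via \cite[Theorem 2.7]{BM} is exactly the route the paper leaves implicit (it offers no proof of the corollary), and for part (1) your argument is complete: the only hypothesis of Theorem~\ref{thmtransSG-proj}(1)(a) bears on $E$ alone, so it applies to any strongly Gorenstein projective module containing $M$ as a direct summand, and additivity of $-\otimes_A R$ together with the fact that direct summands of Gorenstein projectives are Gorenstein projective finishes the job. You are also right to flag that part (2) is not a formal consequence of the same manoeuvre: the hypothesis $\Ext_A^{p}(R,M)=0$ is a condition on the module being transferred, and it must be propagated from $M$ to the enveloping strongly Gorenstein injective module $S=\prod_k C_k$. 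Your upward induction on the cosyzygies $C_k$, $k\geq 0$, is fine.

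The step you use to close the remaining gap, however, does not hold up. For $k<0$ you reduce to showing $\Ext_A^{1+m}(R,C_{k-m})=0$ for $m\gg 0$ and assert that this follows from $\fd_A(R)<\infty$. Finite flat dimension gives no bound on the degrees in which $\Ext_A^{*}(R,-)$ can be nonzero: dimension shifting along a flat resolution controls $\Tor$, not $\Ext$, and a flat module may have nonvanishing $\Ext^1$ and infinite projective dimension (already $\Ext_{\mathbb{Z}}^{1}(\mathbb{Q},\mathbb{Z})\neq 0$ while $\fd_{\mathbb{Z}}(\mathbb{Q})=0$). To make your shift work you would need either $\pd_A(R)<\infty$ in place of $\fd_A(R)<\infty$, or the statement that $\Ext_A^{\geq 1}(F,N)=0$ for every flat $A$-module $F$ and every Gorenstein injective $A$-module $N$ (i.e., that Gorenstein injective modules are cotorsion); neither is supplied by the tools the paper invokes --- \cite[Exercise 5, page 360]{CE} is used there to get $\id_A(J)<\infty$ for injective $R$-modules $J$, which concerns the opposite variable. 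So part (2) of your proof contains a genuinely unproved step; to be fair, it is one that the corollary's implicit derivation from Theorem~\ref{thmtransSG-proj} glosses over as well.
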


Next we compare the Gorenstein projective (resp., injective)
dimension of an $A$-module $M$ and the Gorenstein projective
(resp., injective) dimension of $M\otimes_{A} R$ (resp.,
$\Hom_A(R,M)$) as an $R$-module.

\begin{prop}\label{lem1-2}

 Let  $M$ be an $A$-module. Then:

\begin{enumerate}
    \item Suppose that $\Tor^{k}_A(M,R)=0$,
$\forall\;k\geq 1$. Then: $$\Gpd_A(M)\leq \Gpd_R(M\otimes_{A} R)
.$$
    \item Suppose that $\Ext^{k}_A(R,M)=0$,
$\forall\;k\geq 1$. Then: $$\Gid_A(M)\leq \Gid_R(\Hom_A(R,M)).$$
\end{enumerate}
\end{prop}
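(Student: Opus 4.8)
The plan is to prove both inequalities by relating Gorenstein dimensions over $R$ back to Gorenstein dimensions over $A$, exploiting the vanishing hypotheses to transport the relevant $\Ext$ and $\Tor$ computations through the change of rings $A \to R$. The key structural fact I would use is the Cartan--Eilenberg change-of-rings isomorphism together with the fact that every projective (resp.\ injective) $R$-module has finite projective (resp.\ injective) homological dimension over $A$, exactly as exploited in the proof of Theorem~\ref{thmtransSG-proj}.

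For part (1), I would argue by induction on $n:=\Gpd_R(M\otimes_A R)$; if this is infinite there is nothing to prove, so assume it is finite. The base case $n=0$ is essentially the statement that $M\otimes_A R$ Gorenstein projective forces $M$ Gorenstein projective, which is the content of Theorem~\ref{thmtransSG-proj}(1)(b) in the strongly Gorenstein case and Corollary~\ref{cortransGP}(1) at the level of the Gorenstein class; the vanishing $\Tor^k_A(M,R)=0$ lets the short exact sequences defining Gorenstein projectivity over $R$ descend to short exact sequences over $A$ after applying $-\otimes_A R$ is undone. For the inductive step, one chooses a short exact sequence $0 \to K \to P \to M \to 0$ of $A$-modules with $P$ projective, tensors with $R$ (using $\Tor^k_A(M,R)=0$ so exactness is preserved and so that $\Tor^k_A(K,R)=0$ persists for the induction), and applies the standard fact that $\Gpd_R(M\otimes_A R)=\Gpd_R(K\otimes_A R)+1$ when the dimension is positive, reducing to the smaller case for $K$. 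Then $\Gpd_A(M)\le \Gpd_A(K)+1 \le \Gpd_R(K\otimes_A R)+1 = \Gpd_R(M\otimes_A R)$.

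Part (2) is the dual statement and I would prove it by the mirror-image induction on $\Gid_R(\Hom_A(R,M))$, using $\Ext^k_A(R,M)=0$ to ensure that applying $\Hom_A(R,-)$ to an injective coresolution $0\to M \to I \to C \to 0$ of $A$-modules stays exact and propagates the vanishing to the cosyzygy $C$, together with the change-of-rings isomorphism $\Ext^i_R(J,\Hom_A(R,M))\cong \Ext^i_A(J,M)$ for injective $R$-modules $J$ and the finiteness $\id_A(J)<\infty$. The base case again invokes Corollary~\ref{cortransGP}(2).

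The main obstacle I anticipate is justifying the two ``additivity'' reductions $\Gpd_R(M\otimes_A R)=\Gpd_R(K\otimes_A R)+1$ (and its injective dual) cleanly, i.e.\ verifying that the exact tensoring really does produce a well-behaved syzygy over $R$ whose Gorenstein projective dimension drops by exactly one, and confirming that the $\Tor$-vanishing hypothesis is inherited by the syzygy $K$ so the induction can continue. The subtlety is that $\Gpd$ is not literally computed by a single resolution the way $\pd$ is, so rather than a crude dimension-shifting argument I expect the cleanest route is to use the characterization of $\Gpd_R$ via vanishing of $\Ext_R(-, L)$ against modules $L$ of finite projective dimension, transferring each such vanishing statement to $A$ through the change-of-rings isomorphism and the finite $A$-dimension of projective $R$-modules; this sidesteps having to build an explicit Gorenstein projective $A$-resolution by hand.
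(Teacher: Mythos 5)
Your induction has a genuine gap at the base case. You propose to handle $\Gpd_R(M\otimes_A R)=0$ by citing Theorem~\ref{thmtransSG-proj}(1)(b) and Corollary~\ref{cortransGP}(1), but neither applies: Corollary~\ref{cortransGP}(1) goes in the opposite direction (from $M$ Gorenstein projective to $M\otimes_A R$ Gorenstein projective, under the hypothesis $\pd_A(E)<\infty$), while Theorem~\ref{thmtransSG-proj}(1)(b) requires $E$ to be flat and concerns strongly Gorenstein projective modules. The present proposition assumes only $\Tor^{k}_A(M,R)=0$ for $k\geq 1$, and under that hypothesis alone the descent statement ``$M\otimes_A R$ Gorenstein projective $\Rightarrow$ $M$ Gorenstein projective'' is not established anywhere in the paper; the change-of-rings isomorphism only yields $\Ext^{i}_A(M,P)=0$ for $i\geq 1$ and $P$ projective, which characterizes Gorenstein projectivity only among modules already known to have finite Gorenstein projective dimension (\cite[Theorem 2.20]{HH} carries that finiteness as a hypothesis). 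So your base case is essentially as hard as the full statement. The inductive step itself is fine: the $\Tor$-vanishing passes to the syzygy $K$, the tensored sequence stays exact, $K\otimes_A R$ is an $R$-syzygy of $M\otimes_A R$, and the dimension drops by one.

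The remedy is the route you sketch in your last paragraph, which is exactly the paper's proof and makes the induction unnecessary: assuming $\Gpd_R(M\otimes_A R)\leq d$, apply \cite[Theorem 2.20]{HH} over $R$ to get $\Ext^{d+1}_R(M\otimes_A R,P\otimes_A R)=0$ for every projective $A$-module $P$; the Cartan--Eilenberg isomorphism (valid because $\Tor^{k}_A(M,R)=0$) turns this into $\Ext^{d+1}_A(M,P\otimes_A R)\cong \Ext^{d+1}_A(M,P)\oplus\Ext^{d+1}_A(M,P\otimes_A E)=0$, hence $\Ext^{d+1}_A(M,P)=0$ and $\Gpd_A(M)\leq d$, with no base case to check. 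Note that this final step still invokes \cite[Theorem 2.20]{HH} over $A$, so finiteness of $\Gpd_A(M)$ is implicitly needed; the paper's own proof shares this caveat, and in its application (Theorem~\ref{thm1-1}) the hypothesis $\Ggldim(A)<\infty$ supplies it. Part (2) is dual, exactly as you indicate.
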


\begin{proof}

(1) By hypothesis  $\Tor^k_A (M,R) = 0$ for all $k \geq 1$. So, by
\cite[Proposition 4.1.3, page 118]{CE}, for any $A$-module $P$ and
all $n \geq 1$ we have
$$\Ext^k_A(M,P\otimes_AR) \cong \Ext^k_R(M \otimes_A R,P\otimes_A R).$$
Suppose that $\Gpd_R(M\otimes_{A} R)\leq d$ for some integer
$d\geq 0$. Let $P$ be a projective $A$-module. Then by
\cite[Theorem 2.20]{HH}, $0=\Ext_R^{d+1}(M\otimes_{A}
R,P\otimes_{A} R)\cong \Ext^{d+1}_A(M,P\otimes_AR)$. But
$0=\Ext^{d+1}_A(M,P\otimes_AR)\cong \Ext^{d+1}_A(M,P)\oplus
\Ext^{d+1}_A(M,P\otimes_A E)$. So $\Ext^{d+1}_A(M,P)=0$ for any
projective $A$-module  $P$. Therefore $\Gpd_A(M)\leq d$.

(2) The proof is essentially dual to (1). Here we use
\cite[Proposition 4.1.4, page 118]{CE} instead of
\cite[Proposition 4.1.3, page 118]{CE}.\end{proof}

The following thm gives a relation between  $\Ggldim(A)$ and
$\Ggldim(R)$.

\begin{thm}\label{thm1-1}
Suppose that $\Ggldim(A)$ is finite and $\fd_A(E)=r$, for some
integer $r\geq 0$. Then:
$$\Ggldim(A)\leq \Ggldim(R)+r.$$
\end{thm}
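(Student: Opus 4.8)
The plan is to bound $\Gpd_A(M)$ uniformly over all $A$-modules $M$ by $\Ggldim(R)+r$ and then pass to the supremum, the finiteness of $\Ggldim(A)$ serving to guarantee that each $\Gpd_A(M)$ is finite, so that the characterizations underlying Proposition~\ref{lem1-2} are available. First I would record the flat dimension of $R$ over $A$: since $R\cong A\oplus E$ as an $A$-module, we have $\fd_A(R)=\max(\fd_A(A),\fd_A(E))=r$, and therefore $\Tor_A^{k}(N,R)=0$ for every $A$-module $N$ and every $k>r$.

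Next I would fix an arbitrary $A$-module $M$, choose a projective resolution, and let $K$ be its $r$-th syzygy, so there is an exact sequence
$$0\rightarrow K\rightarrow P_{r-1}\rightarrow\cdots\rightarrow P_0\rightarrow M\rightarrow 0$$
with each $P_i$ projective. Dimension shifting along this sequence gives $\Tor_A^{k}(K,R)\cong\Tor_A^{k+r}(M,R)$ for all $k\geq 1$, and since $k+r>r$ the right-hand side vanishes by the first step. Hence $K$ satisfies $\Tor_A^{k}(K,R)=0$ for all $k\geq 1$, which is exactly the hypothesis of Proposition~\ref{lem1-2}(1); applying it yields
$$\Gpd_A(K)\leq \Gpd_R(K\otimes_A R)\leq \Ggldim(R).$$

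It then remains to compare $\Gpd_A(M)$ with $\Gpd_A(K)$, and here the key is subadditivity of Gorenstein projective dimension along syzygies. Splicing a Gorenstein projective resolution of $K$ of length $\Gpd_A(K)$ with the projectives $P_{r-1},\ldots,P_0$ (which are themselves Gorenstein projective) produces a Gorenstein projective resolution of $M$ of length at most $\Gpd_A(K)+r$, so $\Gpd_A(M)\leq \Gpd_A(K)+r\leq \Ggldim(R)+r$. Since $M$ was arbitrary, taking the supremum gives $\Ggldim(A)=\sup\{\Gpd_A(M)\mid M\ \text{an}\ A\text{-module}\}\leq \Ggldim(R)+r$.

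The main obstacle is the middle step: arranging that all higher $\Tor$ of the chosen module against $R$ vanish. This is precisely where the hypothesis $\fd_A(E)=r$ is consumed and why one must pass to the $r$-th syzygy rather than an arbitrary one — the flat dimension of $R$ dictates exactly how many projective steps are needed. The finiteness of $\Ggldim(A)$ is what legitimizes invoking Proposition~\ref{lem1-2}, whose proof rests on Holm's characterization of Gorenstein projective dimension and thus requires $\Gpd_A(K)$ to be finite a priori (which holds since $\Gpd_A(K)\leq\Ggldim(A)<\infty$).
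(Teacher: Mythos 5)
Your proof is correct and follows essentially the same route as the paper: pass to the $r$-th syzygy $K$ of $M$, use $\fd_A(R)=r$ to verify the Tor-vanishing hypothesis of Proposition~\ref{lem1-2}, and conclude $\Gpd_A(K)\leq \Ggldim(R)$. The only cosmetic difference is the final step, where you splice a Gorenstein projective resolution of $K$ onto the projectives to get $\Gpd_A(M)\leq \Gpd_A(K)+r$, whereas the paper dimension-shifts in $\Ext$ against projective modules; the two are standard and equivalent.
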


\begin{proof}

 Let $M$ be an $A$-module and let
$$\;P_{r}\stackrel{f_{r}}{\rightarrow}P_{r-1}\stackrel{f_{r-1}}
{\rightarrow}
 ...{\rightarrow}P_0\stackrel{f_{0}}{\rightarrow} M\rightarrow 0\;\;\;(i)$$
 be an exact sequence of $A$-modules where each $P_i$ is
 projective. Since $R=A\oplus_A E$ as an $A$-modules, $\fd_A(E)=\fd_A
 (R)=r$. Then for all $k\geq 1$ we have
 $$\;\Tor_{A}^{k}(Imf_r , R)\cong \Tor_{A}^{k+r} (M,R)=0\;\;\;\;(ii)$$
   If $\Ggldim(R)\leq n$, then $\Gpd_R(Imf_r\otimes_{A} R)\leq
 n$,  and by Proposition~\ref{lem1-2} we have $\Gpd_A(Imf_r)\leq n$. From
 (i), we have
 $0=\Ext_A^{n+1}(Imf_r,P)\cong \Ext_A^{r+n+1}(M,P)$, for every projective $A$-module $P$. Therefore  $\Gpd_A(M)\leq n+r$
 and so $\Ggldim(A)\leq
 n+r$ \cite[Lemma 3.3]{BM2}.
 \end{proof}

\end{section}
\begin{section}{Gorenstein global dimension of some trivial ring extensions}\label{sec:3}

   In this section, we study the Gorenstein global
            dimension of particular trivial ring extensions.
            We start by investigating  the Gorenstein  global
            dimension of  $R=A\ltimes E$, where $(A,m)$ is
            a local ring with maximal ideal $m$ and $E$ is an
            $A$-module such that $mE=0$. Recall that a  Noetherian ring $R$ is
            quasi-Frobenius if $\id_R(R)=0$ and a ring $R$ is
            perfect if all flat $R$-modules are projective \cite{Rot}.

   Next we announce the first main result of this section.

\begin{thm}\label{thm1}

Let $(A,m)$ be a local  ring
                    with maximal ideal $m$ and  $E$ an $A$-module
                    such that
                    $mE=0$. Let $R=A\ltimes E$. Then:
                    \begin{enumerate}
                        \item If $A$ is  a Noetherian ring  which is not  a field and  $E$ is  a finitely
                        generated  $A$-module
                        (i.e., $R$ is Noetherian), then
                        $\Ggldim{(R)}=\infty$.
                        \item If $A$ is a perfect ring, then
                        $\Ggldim{(R)}=$ either $\infty$ or $0$. Moreover, in the case $\Ggldim{(R)}=0$, necessarily
                       $A=K$ is a field and $E$ is a $K$-vector space with $dim_k E=1$ (i.e., $R=K\ltimes K$).
                    \end{enumerate}
\end{thm}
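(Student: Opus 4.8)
The plan is to reduce everything to the local structure of $R$ and to lean on two facts: for a commutative Noetherian ring, $\Ggldim(R)<\infty$ if and only if $R$ is (Iwanaga--)Gorenstein, and, in general, $\Ggldim(R)=0$ if and only if $R$ is quasi-Frobenius (the Gorenstein analogue, from \cite{BM2}, of ``$\gldim(R)=0\Leftrightarrow R$ semisimple''). First I would record the features forced by $mE=0$. Since $mE=0$, $E$ is a $k:=A/m$-vector space, $R$ is local with maximal ideal $M=m\ltimes E$ and residue field $k$, and the ideal $N:=0\ltimes E$ satisfies $MN=0$ and $N^2=0$; hence $N\cong k^{(\Lambda)}$ as an $R$-module with $|\Lambda|=\dim_k E$, and there is an exact sequence $0\to N\to R\to A\to 0$ of $R$-modules (with $A\cong R/N$). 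For $0\neq e\in E$ one checks $\mathrm{Ann}_R((0,e))=M$, so $M\in\mathrm{Ass}_R(R)$ and $\mathrm{depth}_R(R)=0$. Finally a direct computation of the socle gives
$$\mathrm{soc}(R)=\mathrm{Ann}_R(M)=\big(\mathrm{soc}(A)\cap\mathrm{Ann}_A(E)\big)\ltimes E,$$
so that $\dim_k\mathrm{soc}(R)=\dim_k\big(\mathrm{soc}(A)\cap\mathrm{Ann}_A(E)\big)+\dim_k E$.

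For (1), the ring $R$ is Noetherian by hypothesis. Suppose, for contradiction, that $\Ggldim(R)<\infty$; then $R$ is Gorenstein, hence Cohen--Macaulay. As $\mathrm{depth}_R(R)=0$, this forces $\dim R=0$, so $R$, and therefore its quotient $A$, is Artinian. A commutative Artinian local ring is Gorenstein exactly when its socle is simple. But $mE=0$ gives $m\subseteq\mathrm{Ann}_A(E)$, and since $A$ is not a field we have $0\neq\mathrm{soc}(A)\subseteq m\subseteq\mathrm{Ann}_A(E)$; combined with $\dim_k E\geq 1$ this yields $\dim_k\mathrm{soc}(R)\geq\dim_k\mathrm{soc}(A)+\dim_k E\geq 2$. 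Hence $R$ is not Gorenstein, a contradiction, and $\Ggldim(R)=\infty$.

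For (2) I would first dispose of the ``moreover'' clause. If $\Ggldim(R)=0$ then $R$ is quasi-Frobenius, hence Noetherian, so $A$ is Noetherian; a Noetherian perfect local ring is Artinian, so $A$ is Artinian and the socle computation applies. Being quasi-Frobenius forces $\dim_k\mathrm{soc}(R)=1$, i.e.\ $\dim_k E=1$ and $\mathrm{soc}(A)\cap\mathrm{Ann}_A(E)=0$. Since $\dim_k E=1$ gives $E\cong k$ and $\mathrm{Ann}_A(E)=m$, the second condition reads $\mathrm{soc}(A)\cap m=0$, which (exactly as in (1)) fails unless $m=0$. Thus $A=K$ is a field and $\dim_K E=1$, i.e.\ $R=K\ltimes K$, which is indeed quasi-Frobenius.

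It remains to establish the dichotomy itself: when $A$ is perfect, $\Ggldim(R)$ cannot be a finite positive number. The first step is that $A$ perfect implies $R$ perfect, since $M^n=m^n\ltimes 0$ for $n\geq 2$ shows $M$ is $T$-nilpotent while $R/M=k$ is a field. Over a perfect commutative local ring one has $\mathrm{depth}_R(R)=0$ (no element of $M$ is a nonzerodivisor, by the descending chain condition on principal ideals) and flat modules are projective. Assuming $\Ggldim(R)=n<\infty$ gives $\Gpd_R(k)\leq n$; the crux is to upgrade this to $\Gpd_R(k)=0$, i.e.\ that $k$ is Gorenstein projective, via a Gorenstein Auslander--Bridger/Bass-type argument yielding $\Gpd_R(k)=\mathrm{depth}_R(R)=0$ in the perfect setting, and then to deduce from ``$k$ is Gorenstein projective over the local ring $R$'' that $R$ is quasi-Frobenius, whence $\Ggldim(R)=0$. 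I expect this last step --- controlling a finite $\Gpd_R(k)$ and extracting quasi-Frobeniusness from the Gorenstein projectivity of the residue field outside the Noetherian world --- to be the main obstacle, and the point where perfectness (flat $=$ projective, $\mathrm{depth}=0$, and $T$-nilpotence of $M$) must enter essentially.
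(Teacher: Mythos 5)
Your part (1) is correct but takes a genuinely different route from the paper. The paper argues by showing that if $\Ggldim(R)=n\geq 1$ then every injective $R$-module has flat dimension $\leq n-1$ (via Glaz's criterion, since every finitely generated ideal of $R$ has a nonzero annihilator), forcing $\Gwdim(R)\leq \Ggldim(R)-1$, which contradicts $\Gwdim(R)=\Ggldim(R)$ for Noetherian rings; the case $n=0$ is then excluded by a separate quasi-Frobenius analysis. You instead pass through ``$\Ggldim(R)<\infty$ implies $\id_R(R)<\infty$'', the depth computation $\mathrm{depth}_R(R)=0$, and the socle count $\dim_k\mathrm{soc}(R)=\dim_k\bigl(\mathrm{soc}(A)\cap \mathrm{Ann}_A(E)\bigr)+\dim_k E\geq 2$. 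That computation is correct (note that both you and the paper tacitly need $E\neq 0$, without which the statement is false), and the same socle count also disposes of the ``moreover'' clause of (2) correctly, where the paper instead uses the double-annihilator characterization of quasi-Frobenius rings. For the Noetherian case your argument is arguably more transparent.

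The genuine gap is the dichotomy in part (2): you must show that a perfect, in general non-Noetherian, ring $R$ cannot satisfy $0<\Ggldim(R)<\infty$, and you explicitly leave the key step --- upgrading $\Gpd_R(k)\leq n$ to $\Gpd_R(k)=0$ and then extracting quasi-Frobeniusness --- as an ``expected obstacle'' rather than proving it. The Auslander--Bridger/Bass-type formula $\Gpd_R(k)=\mathrm{depth}_R(R)$ that you invoke is a statement about Noetherian local rings and is not available here, so the path you sketch does not close. The missing idea is a finitistic-dimension argument that bypasses the residue field entirely: Holm proves $\mathrm{FGPD}(R)=\mathrm{FPD}(R)$ for an arbitrary ring, and for a perfect ring Bass's results give $\mathrm{FPD}(R)=0$; hence if $\Ggldim(R)<\infty$ then $\Ggldim(R)=\mathrm{FGPD}(R)=0$ immediately, after which quasi-Frobeniusness and your socle analysis take over. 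That single citation chain replaces the entire unresolved portion of your argument.
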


        To prove this thm, we need the following Lemmas.

\begin{lem}[{\cite[Lemma 3.4]{BM2}}]\label{lem2.1}
        Let $R$ be  a ring with $\Ggldim(R)<\infty$ and let $n\in \mathbb{N}$. Then the following statements are
        equivalent:
        \begin{enumerate}
            \item $\Ggldim(R)\leq n$;
            \item  $\pd_R(I)\leq n$, for all injective $R$-modules
            $I$.
        \end{enumerate}
\end{lem}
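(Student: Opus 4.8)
The plan is to deduce both implications from Holm's cohomological characterization of Gorenstein injective dimension, combined with the identity $\Ggldim(R)=\sup\{\Gid_R(N)\mid N \text{ an } R\text{-module}\}$ from \cite[Theorem 3.2]{BM2}. The tool I would invoke is the injective dual of \cite[Theorem 2.20]{HH} (namely \cite[Theorem 2.22]{HH}): for any module $N$ with $\Gid_R(N)<\infty$ and any integer $n\geq 0$, one has $\Gid_R(N)\leq n$ if and only if $\Ext_R^i(E,N)=0$ for all $i>n$ and every injective $R$-module $E$. The standing hypothesis $\Ggldim(R)<\infty$ is precisely what makes this tool applicable, since it forces $\Gid_R(N)\leq\Ggldim(R)<\infty$ for every $N$.

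For $(2)\Rightarrow(1)$ I would fix an arbitrary module $N$ and verify the vanishing condition directly: for every injective module $E$ the hypothesis gives $\pd_R(E)\leq n$, hence $\Ext_R^i(E,N)=0$ for all $i>n$. By the cited characterization this yields $\Gid_R(N)\leq n$, and taking the supremum over $N$ gives $\Ggldim(R)\leq n$. The reverse implication $(1)\Rightarrow(2)$ is where the argument is less transparent, and the trick is to read the same $\Ext$-vanishing with the roles of the two arguments exchanged. Assuming $\Ggldim(R)\leq n$, let $I$ be injective. For an \emph{arbitrary} module $N$ we have $\Gid_R(N)\leq n$, so the characterization gives $\Ext_R^i(E,N)=0$ for all injective $E$ and all $i>n$; specializing $E=I$ yields $\Ext_R^i(I,N)=0$ for all $i>n$. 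Since $N$ was arbitrary, this says exactly that $\pd_R(I)\leq n$.

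The step I expect to be the main obstacle is $(1)\Rightarrow(2)$: a priori, knowing only $\Gpd_R(I)\leq n$ controls $\Ext_R^{n+1}(I,-)$ merely against projective (or finite projective dimension) test modules via \cite[Theorem 2.20]{HH}, which is not enough to bound $\pd_R(I)$. The resolution is to apply the characterization not to $I$ but to the second variable $N$, exploiting that $I$ is itself injective and is therefore an admissible test module in the first slot of $\Ext_R^i(E,N)$. Once this viewpoint is adopted, both directions become short and essentially symmetric; the only genuine inputs beyond bookkeeping are Holm's theorem and the two-sided formula for $\Ggldim(R)$, with the finiteness assumption on $\Ggldim(R)$ used solely to license the invocation of Holm's theorem for every module.
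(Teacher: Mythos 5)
The paper offers no proof of this lemma: it is quoted directly from \cite[Lemma 3.4]{BM2}, so there is no in-paper argument to compare against. Your proof is correct as written: the standing hypothesis $\Ggldim(R)<\infty$ together with $\Ggldim(R)=\sup\{\Gid_R(N)\}$ makes Holm's characterization \cite[Theorem 2.22]{HH} applicable to every module $N$, the implication $(2)\Rightarrow(1)$ then follows since $\pd_R(E)\leq n$ forces $\Ext_R^i(E,N)=0$ for $i>n$, and $(1)\Rightarrow(2)$ follows by reading the same vanishing $\Ext_R^i(I,N)=0$ for all $N$ and $i>n$ as the standard criterion for $\pd_R(I)\leq n$. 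This is the natural route to the statement and requires no further justification.
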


The next Lemma gives a characterization of quasi-Frobenius rings.

\begin{lem}[{\cite[ Theorem 1.50]{QF}}]\label{lem1.1}

       For a ring $R$, the following statements are equivalent:
\begin{enumerate}
            \item $R$ is quasi-Frobenius;

            \item $R$ is Noetherian and $Ann_R(Ann_R(I))=I$ for
            any ideal  $I$ of $R$, where $Ann_R(I)$ denotes the
            annihilator of $I$ in $R$.
              \end{enumerate}
\end{lem}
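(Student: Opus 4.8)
The plan is to prove the equivalence by establishing each implication directly, noting first that both conditions already presuppose that $R$ is Noetherian, so the real content is the equivalence, for a fixed commutative Noetherian ring, between self-injectivity ($\id_R(R)=0$) and the double-annihilator identity. Throughout I would exploit the basic identification $\Hom_R(R/I,R)\cong \mathrm{Ann}_R(I)$, sending a map to the image of $\bar 1$: a homomorphism $R/I\to R$ is the same datum as an element $r$ with $rI=0$, and it sends $\bar x$ to $rx$. The inclusion $I\subseteq \mathrm{Ann}_R(\mathrm{Ann}_R(I))$ is automatic and will be used silently in both directions.

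For (1) $\Rightarrow$ (2): assume $R$ is quasi-Frobenius, so $R$ is an injective $R$-module. I would argue the reverse inclusion $\mathrm{Ann}_R(\mathrm{Ann}_R(I))\subseteq I$ contrapositively. If $x\notin I$, then $\bar x\neq 0$ in $R/I$, and because a quasi-Frobenius ring is an injective cogenerator there is a map $\phi\colon R/I\to R$ with $\phi(\bar x)\neq 0$. Under the identification above, $\phi$ corresponds to some $y\in\mathrm{Ann}_R(I)$ with $yx=\phi(\bar x)\neq 0$, so $x\notin \mathrm{Ann}_R(\mathrm{Ann}_R(I))$. The only nonformal input is that $R$ is a cogenerator, which I would extract from the fact that a commutative Noetherian self-injective ring is Artinian, whence $R$ is a finite product of local Artinian factors each of which has nonzero socle, so every simple module embeds into $R$.

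For (2) $\Rightarrow$ (1): assume the double-annihilator identity. First I would observe that $I\mapsto \mathrm{Ann}_R(I)$ is then an order-reversing involution on the lattice of ideals, so the ascending chain condition is transported to the descending chain condition; hence $R$ is Artinian and I may write $R=\prod_i R_i$ with each $(R_i,m_i)$ local Artinian, the identity descending to each factor since annihilators in a finite product are computed componentwise. In a factor, if the socle $\mathrm{soc}(R_i)=\mathrm{Ann}_{R_i}(m_i)$ had dimension $\geq 2$ over $k_i=R_i/m_i$, I could pick a minimal ideal $(x)$ with $\mathrm{Ann}_{R_i}((x))=m_i$, so that $\mathrm{Ann}_{R_i}(\mathrm{Ann}_{R_i}((x)))=\mathrm{soc}(R_i)\supsetneq (x)$, contradicting the identity; thus each $R_i$ has a simple socle. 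Finally, a local Artinian ring with simple (hence essential) socle embeds in the injective hull $E(k_i)$, and a length count via Matlis duality ($\mathrm{length}(E(k_i))=\mathrm{length}(R_i)$) forces this essential embedding to be an isomorphism, so $R_i$ is self-injective; a finite product of such rings is quasi-Frobenius.

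I expect the main obstacle to be direction (2) $\Rightarrow$ (1): passing from the purely ideal-theoretic annihilator condition to the genuinely module-theoretic statement $\id_R(R)=0$. The Noetherian-to-Artinian reduction and the product decomposition are routine bookkeeping, but the crux is the local claim that a simple socle forces self-injectivity, where Matlis duality (or, alternatively, a direct verification of Baer's criterion extending each $f\colon I\to R$ by means of the principal-ideal case of the double-annihilator identity) carries the real weight.
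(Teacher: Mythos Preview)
The paper does not supply a proof of this lemma at all: it is simply quoted from \cite[Theorem 1.50]{QF} and used as a black box, so there is nothing in the paper to compare your argument against.

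That said, your proposal is a correct self-contained proof in the commutative setting the paper works in. Your (1)$\Rightarrow$(2) via the cogenerator property and your (2)$\Rightarrow$(1) via the ACC/DCC transfer, reduction to local Artinian factors, simple-socle forcing, and the Matlis length count are all standard and sound. One minor remark: in the forward direction you could bypass the cogenerator discussion by invoking Lemma~\ref{lem3.2} (also quoted in the paper from the same source), which gives the double-annihilator identity for finitely generated ideals directly from self-injectivity via Baer's criterion; since $R$ is Noetherian this already covers all ideals.
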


Recall that the finitistic Gorenstein projective dimension of a
ring $R$, denoted by $FGPD(R)$, is defined in \cite{HH} as
follows: $$FGPD(R):=\{\Gpd_R(M)\mid\, M \,
                R-module\; and \; \Gpd_R(M)<\infty\}.$$

\begin{proof}[Proof of Theorem~\ref{thm1}]
 (1) Suppose that
    $\Ggldim(R)=n<\infty$ for some positif integer $n$.
    If $n\geq 1$, let $I$ be  an injective $R$-module. By \cite[Lemma
            3.4]{BM2},
            $\pd_R(I)\leq n$. Then there is an exact sequence of $R$-modules
            $$0\longrightarrow
            P_n\longrightarrow P_{n-1}\longrightarrow \cdots
        \longrightarrow P_0 \longrightarrow I \longrightarrow 0$$
        with $P_i$ projective and hence free ($R$ is local).
       Since $A$ is local and $mE=0$, every  finitely generated
        ideal of $R$ has a nonzero annihilator. From \cite[Corollary 3.3.18]{Glaz}, $coker(P_n \longrightarrow
        P_{n-1})$ is flat. Then $\fd_R(I)\leq (n-1)$.
        Therefore   from \cite[Theorem 4.11]{BM2}
        and \cite[Theorem 3.14]{HH} we obtain $$\Gwdim(R)\leq n-1=\Ggldim(R)-1\qquad\qquad (\star)$$
    On the other hand,  $R$ is Noetherian by
        (\cite[Theorem 25.1]{Extension trivial}), and from \cite[Corollary
        2.3]{BM2} we get
         $$\Gwdim(R)=\Ggldim(R).\qquad\qquad (\star\star)$$
         So from $(\star)$ and $(\star\star)$ we conclude that
          $\Ggldim(R)=\infty$.

          Now if $\Ggldim(R)=0$, then $R$ is quasi-Frobenius. First we
          claim that $A$ is a quasi-Frobenius ring. Since $A$ is Noetherian
          and by Lemma~\ref{lem1.1} we must prove only that
          $Ann_A(Ann_A(I))=I$ for any ideal $I$ of $A$. Let $I$ be
          an ideal of $A$. Since $R$ is quasi-Frobenius it is
          easy to see that
          $Ann_R(Ann_R(I\ltimes E))=Ann_A(Ann_A(I))\ltimes E=I\ltimes
          E$. Hence  $I=Ann_A(Ann_A(I))$ and $A$ is quasi-Frobenius; thus $\Ggldim(A)=0$.
          On the other hand, since $R$ is quasi-Frobenius,  $R$ is
          self-injective. Then $\Ext_R^i(A,R)=0$ for any integer $i\geq
          1$ and so $\id_A(m\oplus_A
          E)=\id_R(R)=0$ by \cite[Lemma 4.35]{Extension trivial}. Hence, $m\oplus_A
          E$  is a projective $A$-module by Lemma~\ref{lem2.1}; in particular $E$ is a projective
          $A$-module and so $E$ is
          free since $A$ is local. Contradiction since $mE=0$ and $m\neq
          0$. Therefore, we conclude that $\Ggldim(R)=\infty$.

(2) First, suppose that
            $\Ggldim(R)<\infty$. Note that since $A$ is perfect, $R$ is perfect too by
                \cite[Proposition 1.15]{Extension trivial}. Combining \cite[cor 7.12]{bass} and \cite[Theorem
                2.28]{HH} we conclude that
                $FGPD(R)=FPD(R)=0$   and so $\Ggldim(R)=FGPD(R)=0$.
            Then  from Lemma~\ref{lem2.1} and \cite[Theorem 7.56]{QF}
           $R$ is quasi-Frobenius.
           In particular  $R$ is
           Noetherian and by (1) $A=K$ is a
            field. Now we claim that $dim_K E=1$. Assume that $dim_K E\geq 2$ and let $E'\subsetneq E$ be a proper
            submodule of $E$. Obviously   $0\ltimes E\subseteq Ann_R(Ann_R(0\ltimes E'))\neq 0\ltimes
            E'$, this is a contradiction since  $R$ is quasi-Frobenius  and by Lemma~\ref{lem1.1}.
            Therefore $dim_K E=1$ and $E\cong K$. Then $R=K\ltimes K$.\end{proof}

\begin{exmp}\label{exptriv1}
Let $K$ be a field, $X_{1},X_{2},...,X_{n}$  $n$ indeterminates
over $K$, $A =K[[X_{1},...,X_{n}]]$, the power series ring in $n$
variables over $K$, and $R :=A \ltimes K$. Then, $\Ggldim(R)
=\infty $.
\end{exmp}

         Next we announce the second main thm of this section.

\begin{thm}\label{thm2}
        Let $D$ be an integral domain which is not a field, $K$ its
        quotient field, $E$ a $K$-vector space, and $R:= D\ltimes E$. Then $\Ggldim(R)=\infty$
\end{thm}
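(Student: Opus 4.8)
The plan is to argue by contradiction, exhibiting a single $R$-module whose Gorenstein projective dimension must be infinite. Write $N := 0\ltimes E$, a square-zero ideal of $R$ with $R/N\cong D$, and regard $\bar D:=R/N$ as an $R$-module. Suppose toward a contradiction that $\Ggldim(R)=n<\infty$; then $\Gpd_R(\bar D)\leq n$, so by the $\Ext$-characterization of Gorenstein projective dimension \cite[Theorem 2.20]{HH} (already used in the proof of Proposition~\ref{lem1-2}) one must have $\Ext_R^i(\bar D,R)=0$ for every $i>n$. The whole argument then reduces to showing that, to the contrary, $\Ext_R^i(\bar D,R)\neq 0$ for arbitrarily large $i$. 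Note that this approach is uniform in $E$ and needs no passage to a localization.

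First I would establish the base case from the canonical short exact sequence $0\to N\to R\to \bar D\to 0$ of $R$-modules, where $N\cong E$ with $R$ acting through $R\twoheadrightarrow D$. Since $N^2=0$, every $R$-map out of $N$ lands in $\mathrm{Ann}_R(N)=N$, whence $\Hom_R(N,R)=\Hom_R(N,N)=\Hom_D(E,E)=\mathrm{End}_K(E)$, the last equality because a $D$-linear endomorphism of a $K$-vector space is automatically $K$-linear ($K=\qf(D)$). Applying $\Hom_R(-,R)$ to the sequence identifies $\Ext_R^1(\bar D,R)$ with the cokernel of the restriction map $R=\Hom_R(R,R)\to\mathrm{End}_K(E)$, which sends $(d,e)$ to the scalar $d\cdot\mathrm{id}_E$. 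Hence $\Ext_R^1(\bar D,R)\cong \mathrm{End}_K(E)/(D\cdot\mathrm{id}_E)$, and this is nonzero precisely because $D\subsetneq K$, i.e. because $D$ is not a field; in the borderline case $\dim_K E=1$ the cokernel is exactly $K/D$. This already settles $n=0$ and isolates the role of the hypothesis: for $D=K$ the map is onto and $\Ext_R^1$ vanishes, consistent with $K\ltimes E$ being quasi-Frobenius when $\dim_K E=1$ (Theorem~\ref{thm1}(2)).

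To reach arbitrarily high degrees I would propagate this computation by dimension shifting: $\Ext_R^{i+1}(\bar D,R)\cong \Ext_R^{i}(N,R)$ for $i\geq 1$, and the syzygies of $\bar D$ remain modules annihilated by the nilpotent ideal $N$, that is, modules over $R/N\cong D$ built out of $E$. The key structural input is that all the $D$-tensor powers $E^{\otimes_D i}$ stay nonzero $K$-vector spaces, since $K\otimes_D K\cong K$ yields $E\otimes_D E\cong K^{(\Lambda\times\Lambda)}$; thus the free resolution of $\bar D$ over $R$ never degenerates and the same $\mathrm{End}_K(\,\cdot\,)/D$ defect recurs at each stage. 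Carrying this out should give $\Ext_R^i(\bar D,R)\neq 0$ for all $i\geq 1$, contradicting the vanishing forced by $\Gpd_R(\bar D)\leq n$ and proving $\Ggldim(R)=\infty$.

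I expect the main obstacle to be exactly this last step: controlling a full free resolution of $\bar D$ over the (generally non-Noetherian) ring $R$ and proving that the higher $\Ext_R^i(\bar D,R)$ do not vanish. The $i=1$ computation is clean and purely formal, but tracking the connecting maps in higher degrees requires a careful description of the syzygy modules (or a suitable change-of-rings spectral sequence for $R\twoheadrightarrow D$ with $N^2=0$); the delicate point is to verify that the nonvanishing \emph{persists} rather than collapsing, which is where the assumptions that $D$ is not a field and that $E$ is a nonzero $K$-vector space must be used jointly.
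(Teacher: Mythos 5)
Your reduction is sound and your degree-one computation is correct: if $\Ggldim(R)=n<\infty$ then $\Gpd_R(\bar D)\leq n$ forces $\Ext_R^i(\bar D,R)=0$ for $i>n$ by \cite[Theorem 2.20]{HH}, and your identification $\Ext_R^1(\bar D,R)\cong \Hom_K(E,E)/(D\cdot\mathrm{id}_E)\neq 0$ (using $\mathrm{Ann}_R(N)=N$ and $D\subsetneq K$) is a clean, more explicit substitute for the paper's first step, which instead deduces $\Ext_R^i(D,R)\neq 0$ for \emph{some} $i\geq 1$ from the failure of the double-annihilator condition on the ideal $0\ltimes Da$. But the proof stops exactly where it has to start: you need non-vanishing of $\Ext_R^i(\bar D,R)$ in \emph{arbitrarily high} degrees, and the sketch you give for this rests on a false structural claim. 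The syzygies of $\bar D$ over $R$ are \emph{not} all annihilated by $N=0\ltimes E$: the first syzygy is $N\cong E$, but the kernel of a free cover $R^{(J)}\twoheadrightarrow N$ is $Z\ltimes E^{(J)}$, where $Z\subseteq D^{(J)}$ is the $D$-relation module of $E$; since a nonzero $K$-vector space is never a free $D$-module when $D$ is not a field, $Z\neq 0$ and this second syzygy is not killed by $N$, is not a $K$-vector space, and is not of the form $\mathrm{End}_K(\,\cdot\,)$-defect you describe. So the "same defect recurs at each stage" assertion is unsupported, and the tensor powers $E^{\otimes_D i}$ do not actually appear as syzygies. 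Note also that the statement you are after is strictly stronger than $\Gpd_R(\bar D)=\infty$ (non-vanishing of $\Ext^i(-,R)$ for large $i$ is not implied by infinite Gorenstein projective dimension), so there is no shortcut back from the conclusion.

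The paper avoids this difficulty entirely by never computing higher $\Ext$ groups of $\bar D$. It shows: (i) $\bar D=R/(0\ltimes E)$ is not Gorenstein projective (one nonvanishing $\Ext^i(\bar D,R)$ suffices for that); (ii) hence $0\ltimes E$ is not Gorenstein projective, via a pushout comparing a complete resolution of $0\ltimes E$ with $0\to 0\ltimes E\to R\to \bar D\to 0$ and the resolving property \cite[Theorem 2.5]{HH}; (iii) $0\ltimes E$ admits a free resolution all of whose syzygies are direct sums of copies of $0\ltimes E$, so no syzygy is ever Gorenstein projective and $\Gpd_R(0\ltimes E)=\infty$ by \cite[Proposition 2.18]{HH}. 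If you want to salvage your route, the realistic options are either to reproduce an argument of type (ii)--(iii) (shifting the problem from $\bar D$ to $N$, whose resolutions are more controllable), or to set up a genuine change-of-rings spectral sequence for $R\twoheadrightarrow D$ with kernel of square zero and prove the persistence of non-vanishing there; as written, the persistence step is a gap, not a routine verification.
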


To prove this thm we need the following Lemmas.

\begin{lem}[{\cite[Remarks 3.10]{BM2}}]\label{lem3.1}
                    For a
                    ring $R$, if $\Ggldim(R)$ is finite, then  \begin{eqnarray}
                    \nonumber  \Ggldim(R) &=& \sup\{\Gpd_{R}(R/I)\,\mid\,I\  ideal\ of\ R\} \\
                    \nonumber        &=& \sup\{\Gpd_{R}(M)\,\mid\,M\ finitely\ generated\ R-module\}.
                    \end{eqnarray}
\end{lem}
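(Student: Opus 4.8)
The plan is to deduce both equalities from the single non-trivial inequality $\Ggldim(R)\le \sup\{\Gpd_R(R/I)\mid I\text{ an ideal of }R\}$, everything else being monotonicity of suprema. Write $c:=\sup\{\Gpd_R(R/I)\mid I\text{ an ideal}\}$ and $b:=\sup\{\Gpd_R(M)\mid M\text{ finitely generated}\}$. Each cyclic module $R/I$ is finitely generated and each finitely generated module is in particular a module, so, using $\Ggldim(R)=\sup\{\Gpd_R(M)\mid M\ R\text{-module}\}$, I get for free $c\le b\le \Ggldim(R)<\infty$. It therefore suffices to prove $\Ggldim(R)\le c$, since then the chain $c\le b\le \Ggldim(R)\le c$ collapses and all three numbers agree.

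The key idea is to exchange the two arguments of $\Ext$. I would fix an arbitrary projective $R$-module $Q$ and an arbitrary ideal $I$. As $\Ggldim(R)<\infty$, we have $\Gpd_R(R/I)\le c<\infty$, so \cite[Theorem 2.20]{HH} applies and yields $\Ext_R^{c+1}(R/I,Q)=0$. Letting $I$ range over all ideals, this says exactly that $\Ext_R^{c+1}(-,Q)$ vanishes on every cyclic module. I would then read this not as a statement about the modules $R/I$ but as a statement about $Q$: by the higher Baer criterion, namely $\id_R(N)\le c$ if and only if $\Ext_R^{c+1}(R/I,N)=0$ for every ideal $I$, it follows that $\id_R(Q)\le c$. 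Since $Q$ was an arbitrary projective module, for every $R$-module $M$ and every projective $Q$ we obtain $\Ext_R^{c+1}(M,Q)=0$. Finally, $\Gpd_R(M)<\infty$ (again because $\Ggldim(R)<\infty$), so a second application of \cite[Theorem 2.20]{HH} gives $\Gpd_R(M)\le c$. As $M$ is arbitrary, $\Ggldim(R)\le c$, as required.

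The main obstacle, or rather the one genuinely non-formal ingredient, is the higher Baer criterion invoked in the pivot step; the remainder is bookkeeping with suprema together with two passages through the $\Ext$-vanishing characterization of Gorenstein projective dimension. The delicate point to check at each use of \cite[Theorem 2.20]{HH} is that the module in question has finite Gorenstein projective dimension, and this is precisely where the hypothesis $\Ggldim(R)<\infty$ is consumed: without it the translation between $\Gpd_R(R/I)$ and the vanishing of $\Ext$ is unavailable, which is consistent with the statement being asserted only under finiteness of $\Ggldim(R)$.
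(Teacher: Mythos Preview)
The paper does not supply a proof of this lemma; it is quoted verbatim from \cite[Remarks 3.10]{BM2} and used as a black box. So there is nothing in the paper to compare your argument against.

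Your argument is correct. One wording issue worth tightening: after establishing $\id_R(Q)\le c$ for every projective $Q$, you record only $\Ext_R^{c+1}(M,Q)=0$ and then invoke \cite[Theorem 2.20]{HH}. Condition (iii) of that theorem asks for vanishing of $\Ext_R^{i}(M,Q)$ for \emph{all} $i>c$, not just $i=c+1$; but of course $\id_R(Q)\le c$ already gives you that, so nothing is missing in substance---just say ``for all $i>c$'' rather than the single degree $c+1$ when you feed it into Holm's theorem. The two uses of finiteness of $\Ggldim(R)$ are placed exactly where they are needed.
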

\begin{lem}[{\cite[Corollary 1.38]{QF}}]\label{lem3.2}
        Let $A$ be a ring. If $A$ is self-injective (i.e., $\id_A(A)=0$),
        then $Ann_A(Ann_A(I))=I$ for any finitely generated ideal
        $I$ of $A$.
\end{lem}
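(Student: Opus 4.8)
The plan is to prove the two inclusions separately. The inclusion $I\subseteq Ann_A(Ann_A(I))$ holds for every ideal, with no hypothesis on $A$ or $I$: any $x\in I$ kills every element of $Ann_A(I)$ by the very definition of $Ann_A(I)$, so $x\in Ann_A(Ann_A(I))$. All the content therefore lies in the reverse inclusion $Ann_A(Ann_A(I))\subseteq I$, and this is precisely where both the finite generation of $I$ and the self-injectivity of $A$ enter.

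To prove $Ann_A(Ann_A(I))\subseteq I$, I would write $I=(x_1,\dots,x_n)$ and fix $b\in Ann_A(Ann_A(I))$, aiming to exhibit $b$ as an $A$-linear combination of the $x_i$. First I would form the submodule $M=\{(ax_1,\dots,ax_n)\mid a\in A\}\subseteq A^n$, namely the image of the map $a\mapsto(ax_1,\dots,ax_n)$, and define $h\colon M\to A$ by $h(ax_1,\dots,ax_n)=ab$. The crucial verification is that $h$ is well defined: if $(a-a')x_i=0$ for every $i$, then $a-a'\in Ann_A(I)$, whence $(a-a')b=0$ because $b\in Ann_A(Ann_A(I))$, so $ab=a'b$. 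This is the one place the hypothesis $b\in Ann_A(Ann_A(I))$ is used.

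Next I would invoke self-injectivity. Since $\id_A(A)=0$, $A$ is injective as an $A$-module, so applying $\Hom_A(-,A)$ to the inclusion $M\hookrightarrow A^n$ shows that the homomorphism $h\colon M\to A$ extends to a homomorphism $H\colon A^n\to A$. Because $n$ is finite, $\Hom_A(A^n,A)\cong A^n$, so $H$ is given by a tuple: $H(c_1,\dots,c_n)=\sum_{i=1}^n r_ic_i$ with $r_i\in A$. Since $H$ restricts to $h$ on $M$, for every $a\in A$ we obtain $ab=H(ax_1,\dots,ax_n)=a\sum_{i=1}^n r_ix_i$; taking $a=1$ gives $b=\sum_{i=1}^n r_ix_i\in I$. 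This establishes $Ann_A(Ann_A(I))\subseteq I$ and hence the desired equality.

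The proof itself is short, so the main point is to track precisely where each hypothesis is consumed rather than to overcome a single hard obstacle. Self-injectivity of $A$ is used exactly once, to extend $h$ across the inclusion $M\hookrightarrow A^n$ via the injectivity of the target $A$. The finite generation of $I$ is used to guarantee that $\Hom_A(A^n,A)\cong A^n$, so that the extension $H$ is a \emph{finite} $A$-linear combination and the resulting element $b=\sum_{i=1}^n r_ix_i$ genuinely lies in the ideal $I$. I would stress this last point, since for an infinitely generated ideal the analogous extension need not be expressible as a finite combination of the generators, and then the argument would no longer force $b$ into $I$; this is exactly why the statement is restricted to finitely generated ideals.
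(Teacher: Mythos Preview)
Your argument is correct. The paper itself gives no proof of this lemma: it is quoted verbatim as \cite[Corollary 1.38]{QF} and used as a black box. What you have supplied is essentially the standard proof one finds in that reference---define a homomorphism on the cyclic submodule $A\cdot(x_1,\dots,x_n)\subseteq A^n$ using the double-annihilator hypothesis to secure well-definedness, extend it to $A^n$ by injectivity of $A$, and read off $b$ as a finite $A$-linear combination of the generators. Your tracking of where each hypothesis is consumed (self-injectivity for the extension, finite generation for the identification $\Hom_A(A^n,A)\cong A^n$) is accurate and worth keeping.
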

\begin{proof}{of Theorem~\ref{thm2}.}
            First we claim that $\frac{R}{0\ltimes
        E}$ is not a Gorenstein projective $R$-module. For this, let $0\neq a\in D$ a
        non-invertible element, then $R(0,a)=0\ltimes Da$ is an ideal
        of $R$. Clearly, $0\ltimes Da\varsubsetneq 0\ltimes E\subseteq Ann_R(Ann_R(0\ltimes
        Da))$, and by Lemma~\ref{lem3.2}, $\id_R(R)\neq 0=\id_D(E)$,
        then $\Ext_R^i(\frac{R}{0\ltimes E},R)\cong \Ext_R^i(D,R)\neq 0$, for some $i\geq 1$ \cite[Proposition 4.35]{Extension trivial}. So $\frac{R}{0\ltimes
        E}$ is not a Gorenstein projective $R$-module \cite[Proposition 2.3]{HH}.
        Now we claim that $0\ltimes E$ is not a Gorenstein projective
        $R$-module. Deny.  $0\ltimes E$ is  a Gorenstein projective
        $R$-module. Then  there is
        an exact sequence of $R$-modules $$0\longrightarrow 0\ltimes
        E\longrightarrow F\longrightarrow G\longrightarrow 0 \qquad(1)$$
        where $F\cong R^{I}$ is a free $R$-module and $G$ is
        Gorenstein projective  by \cite[Proposition 2.4]{HH}. Consider the pushout diagram

        $$
\begin{array}{ccccccccc}
   &  &  &  & 0 &  & 0 &  &  \\
   &  &  &  & \downarrow &  & \downarrow &  &  \\
  0 & \longrightarrow & 0\ltimes E & \longrightarrow & R & \longrightarrow & \frac{R}{0\ltimes E} & \longrightarrow & 0 \\
   &  & \shortparallel &  & \downarrow &  & \downarrow &  &  \\
   0& \longrightarrow & 0\ltimes E & \longrightarrow & R^{I} & \longrightarrow & C' & \longrightarrow & 0 \\
   &  &  &  & \downarrow &  & \downarrow &  &  \\
   &  &  &  & R^{I'} & = & R^{I'} &  &  \\
   &  &  &  & \downarrow &  & \downarrow &  &  \\
   &  &  &  & 0 &  & 0 &  &  \\
\end{array}$$


        Combining  the exact sequence $(1)$ and the  short exact sequence in the
        pushout $0\rightarrow 0\ltimes E \rightarrow R^I \rightarrow C'\rightarrow
        0$, yields $C'\cong G$ is Gorenstein projective. Then from  the  short exact sequence
        $0\rightarrow  \frac{R}{0\ltimes E}\rightarrow  C'\rightarrow R^{I'}\rightarrow 0$, we get $\frac{R}{0\ltimes
        E}$ is Gorenstein projective \cite[Theorem
        2.5]{HH}. But this contradicts the fact that $\frac{R}{0\ltimes E}$ is not Gorenstein projective in the first part of the proof. Then $0\ltimes E$ is not Gorenstein
        projective. On the other hand, from the short exact sequence
        $0\longrightarrow (0\ltimes E)^{J}\longrightarrow R^{J}\longrightarrow 0\ltimes E\longrightarrow
        0$ we obtain $\Gpd_R(0\ltimes
        E)=\infty$ \cite[Propsition 2.18]{HH}. Therefore $\Ggldim(R)=\infty$.\end{proof}

       Note that the condition ``$D$ is not a field''  in Theorem~\ref{thm2} is necessary. For,
the next corollary  shows that for  any field $K$,
$\Ggldim(K\ltimes K)=0$. However \cite[Lemma 2.2]{Mahdou} asserts
that $\gldim(K\ltimes K)=\infty$.

\begin{cor}\label{cor1}
       Let $K$ be a field. Then:
        \begin{enumerate}
        \item $\Ggldim(K\ltimes K)=0$.
        \item $\Ggldim(K\ltimes K^n)=\infty$, for any $n\geq 2$.
        \end{enumerate}
\end{cor}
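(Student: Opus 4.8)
The plan is to reduce both parts to Theorem~\ref{thm1}(2), since in each case $A=K$ is a field, hence perfect, and its maximal ideal is $0$, so the hypothesis $mE=0$ is automatic. Part (2) then follows at once, while part (1) needs one extra input: showing that $K\ltimes K$ is quasi-Frobenius.

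For part (2), I would take $A=K$ and $E=K^{n}$ with $n\geq 2$, so that $R=K\ltimes K^{n}$ satisfies the hypotheses of Theorem~\ref{thm1}(2). That theorem forces $\Ggldim(R)\in\{0,\infty\}$. If $\Ggldim(R)=0$ held, the ``moreover'' clause would force $\dim_K E=1$; but $\dim_K K^{n}=n\geq 2$, a contradiction. Hence $\Ggldim(K\ltimes K^{n})=\infty$.

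For part (1), Theorem~\ref{thm1}(2) again gives $\Ggldim(K\ltimes K)\in\{0,\infty\}$, so it suffices to rule out $\infty$ by proving $\Ggldim(K\ltimes K)=0$, equivalently that $R=K\ltimes K$ is quasi-Frobenius. First I would note that $R$ is a local Artinian $K$-algebra (indeed $R\cong K[X]/(X^{2})$ via $(a,b)\mapsto a+bX$) whose only ideals are $0$, $m=0\ltimes K$, and $R$, because $E=K$ is a simple $A$-module. I would then verify the double-annihilator condition of Lemma~\ref{lem1.1}: a direct computation gives $Ann_R(0\ltimes K)=0\ltimes K$, while the cases $I=0$ and $I=R$ are immediate, so $Ann_R(Ann_R(I))=I$ for every ideal $I$. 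Since $R$ is Noetherian, Lemma~\ref{lem1.1} yields that $R$ is quasi-Frobenius.

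It remains to pass from ``$R$ quasi-Frobenius'' to ``$\Ggldim(R)=0$'', which is the step not handed to us directly by the earlier results: the excerpt only ever uses the forward implication $\Ggldim(R)=0\Rightarrow R$ quasi-Frobenius, and Lemma~\ref{lem2.1} is unavailable here since it presupposes finiteness of $\Ggldim(R)$. I would supply the converse via the standard fact that over a quasi-Frobenius ring the projective and injective modules coincide: splicing a projective resolution of an arbitrary $R$-module $M$ with an injective (hence projective) coresolution produces a complete projective resolution that stays exact under $\Hom_R(-,Q)$ for every projective $Q$, as such $Q$ is then injective. Consequently every $R$-module is Gorenstein projective, so $\Gpd_R(M)=0$ for all $M$ and $\Ggldim(K\ltimes K)=0$. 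This last implication---equivalently the characterization $\Ggldim(R)=0\iff R$ quasi-Frobenius of \cite{BM2}---is the only genuinely new ingredient; everything else is either Theorem~\ref{thm1} or a short annihilator computation.
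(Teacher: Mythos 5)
Your proposal is correct and follows the route the paper clearly intends: the paper states this corollary without proof, and it is meant to follow from Theorem~\ref{thm1}(2) (a field is perfect and $mE=0$ is automatic), with part (1) additionally using that $K\ltimes K\cong K[X]/(X^2)$ is quasi-Frobenius and that quasi-Frobenius rings have Gorenstein global dimension $0$ (the characterization from \cite{BM2}). You correctly identify that last implication as the one ingredient not contained in the excerpt, and your splicing argument for it is sound.
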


\begin{exmp}\label{exptriv2}
Let  $R:=\Z \ltimes \Q$, where $\Z$ is the ring of integers and
$\Q$ the field of rational numbers. Then $\Ggldim(R) =\infty $.
\end{exmp}

        Next we exibit an exmp showing that, in general,
the transfer of  the notion of Gorenstein projective module does
not carry up to pullback constructions.

\begin{exmp}\label{exppulb}
Let $(D,m)$ be a discrete valuation domain and $K=\qf(D)$.
Consider the following pullback

        $$
\begin{array}{ccc}
  R=D\ltimes K & \longrightarrow & T=K\ltimes K \\
  \downarrow &  & \downarrow\\
  D\cong\frac{R}{0\ltimes K} & \longrightarrow & K \\
\end{array}$$

        Let $0\neq a\in m$ and  $I=0\ltimes Da$. Consider the following short exact sequence of
        $R$-modules
        $$0\longrightarrow 0\ltimes K \longrightarrow R\stackrel{u}\longrightarrow 0\ltimes Da\longrightarrow
        0$$
        where
       $u(b,e)=(b,e)(0,a)=(0,ba)$. Similar arguments used in the proof of Theorem~\ref{thm2} yield $0\ltimes Da$ is not
        Gorenstein projective,  $I\otimes_{R} T\cong 0\ltimes K$ is a Gorenstein
        projective ideal of $T$, and $I\otimes_{R} \frac{R}{0\ltimes K}\cong \frac{R}{0\ltimes
        K}$ is a free $\frac{R}{0\ltimes K}$-module, then Gorenstein projective.
\end{exmp}

\end{section}

\end{document}